\definecolor{qqqqff}{rgb}{0.,0.,0.}
\theoremstyle{plain}
\newtheorem{thm}{Theorem}[section]
\newtheorem{theorem}[thm]{Theorem}
\newtheorem{lemma}[thm]{Lemma}
\newtheorem{proposition}[thm]{Proposition}
\theoremstyle{definition}
\newtheorem{definition}[thm]{Definition}
\newtheorem{remark}[thm]{Remark}
\newtheorem{example}[thm]{Example}
\newtheorem{problem}[thm]{Problem}
\newtheorem{thevarthm}[thm]{\varthmname}
\newenvironment{varthm*}[1]{\trivlist\item[]{\bf #1.}\it}{\endtrivlist}
\renewcommand\geq{\geqslant}
\renewcommand\leq{\leqslant}
\newcommand\be{\begin{eqnarray*}}
\newcommand\ee{\end{eqnarray*}}
\newcommand\K{\mathbb K}
\renewcommand\P{\mathbb P}
\renewcommand\L{\mathbb L}
\newcommand\calh{{\mathcal H}}
\newcommand\newop[2]{\def#1{\mathop{\rm #2}\nolimits}}
\newop\log{log}
\newop\ord{ord}
\newop\Gal{Gal}
\newop\SL{SL}
\newop\Bl{Bl}
\newop\mult{mult}
\newop\mass{mass}
\newop\div{div}
\newop\codim{codim}
\newop\sing{sing}
\newop\Zeroes{Zeroes}
\newop\Ass{Ass}
\newop\depth{depth}
\def\keywordname{{\bfseries Keywords}}%
\def\keywords#1{\par\addvspace\medskipamount{\rightskip=0pt plus1cm
\def\and{\ifhmode\unskip\nobreak\fi\ $\cdot$
}\noindent\keywordname\enspace\ignorespaces#1\par}}
\def\subclassname{{\bfseries Mathematics Subject Classification
(2010)}\enspace}
\def\subclass#1{\par\addvspace\medskipamount{\rightskip=0pt plus1cm
\def\and{\ifhmode\unskip\nobreak\fi\ $\cdot$
}\noindent\subclassname\ignorespaces#1\par}}
\newcommand\beginproof[1]{\trivlist\item[\hskip\labelsep{\em #1.}]}
\newcommand\proofof[1]{\beginproof{Proof of #1}}
\def\endproof{\hspace*{\fill}\endproofsymbol\endtrivlist}
\def\endproofsymbol{\frame{\rule[0pt]{0pt}{6pt}\rule[0pt]{6pt}{0pt}}}
\begin{document}

\author{ Mohammad Zaman Fashami, Hassan Haghighi, Tomasz Szemberg}
\title{On the fattening of ACM arrangements\\ of codimension 2 subspaces in $\mathbb{P}^N$}
\date{\today}
\maketitle
\thispagestyle{empty}

\begin{abstract}
   In the present note we study configurations of codimension $2$ flats in projective spaces
   and classify those with the smallest rate of growth of the initial sequence. Our work extends
   those of Bocci, Chiantini in $\P^2$ and Janssen in $\P^3$ to projective spaces of arbitrary dimension.
\keywords{ACM subschemes, Symbolic powers, Star configurations, Pseudo-star configurations.}
\subclass{13A15, 14N20, 14N05.}
\end{abstract}

%
%
\section{Introduction}
   A homogeneous ideal $I\subset R=\K[\P^N]$ in the ring of polynomials with coefficients in a field $\K$,
   decomposes
   as the direct sum of graded parts $I=\oplus_{t\geq 0}I_t$.
   For a nontrivial homogeneous ideal $I$ in $\K[\P^N]$, the \emph{initial degree}
   $\alpha(I)$ of $I$ is the least integer $t$ such that $I_t\neq 0$.

   For a positive integer $m$, the $m^{th}$ symbolic power $I^{(m)}$ of $I$ is defined as
   $$I^{(m)}=\bigcap_{P\in\Ass(I)}\left(I^mR_P\cap R\right),$$
   where $\Ass(I)$ is the set of associated primes of $I$ and the intersection takes
   place in the field of fractions of $R$.

   We define the \emph{initial sequence} of $I$ as the sequence of integers $\alpha_m=\alpha(I^{(m)})$.
   If $I$ is a radical ideal determined by the vanishing along a closed subscheme $Z\subset\P^N$,
   then the Nagata-Zariski Theorem \cite[Theorem 3.14]{EisenbudBook} provides a nice geometric
   interpretation of symbolic powers of $I$, namely $I^{(m)}$ is the ideal of polynomials
   vanishing to order at least $m$ along $Z$. This implies, in particular, that the initial
   sequence is strictly increasing.


   The study of the relationship between the sequence of initial degrees of symbolic powers of homogeneous
   ideals and the geometry of the underlying algebraic sets in projective spaces has been initiated by
   Bocci and Chiantini in \cite{BocChi11}. They proved that if $Z\subset\P^2$ is a finite set of points
   and $I=I(Z)$ is the vanishing ideal of $Z$, then the equality
   $$\alpha(I^{(2)})=\alpha(I)+1$$
   implies that either all   points in $Z$ are collinear or they form a star configuration (see Definition \ref{def:star}).

   This result has been considerably generalized in several directions.
   Dumnicki, Tutaj-Gasi\'nska and the third author studied higher symbolic powers of ideals supported
   on points in \cite{planar1} and \cite{planar2}.
   Natural analogies of the problem have been studied on $\P^1\times\P^1$ in \cite{P1xP1}
   and on Hirzebruch surfaces in general in \cite{DLS15}. Bauer and the third author
   proposed in \cite{BauSze15} the following conjecture for points in higher dimensional
   projective spaces and they proved it for $N=3$.
\begin{varthm*}{Conjecture}[Bauer, Szemberg]\label{conj:BS}
   Let $Z$ be a finite set of points in the projective space $\mathbb{P}^N$ and let $I$ be the radical ideal defining $Z$. If
\begin{equation*}
    d :=\alpha(I^{(N)})=\alpha(I) +N-1
\end{equation*}
   then either $\alpha(I)=1$ and the set $Z$ is contained in a single hyperplane
   or $Z$ is a star configuration of codimension $N$ associated to $d$ hyperplanes in $\P^N$.
\end{varthm*}
   In recent years many problems stated originally for points in projective spaces have been
   generalized to arrangements of flats, see e.g.
   \cite{GHV13, DHST14, MalSzp18, MalSzp17, DFST18} .
   In particular, Janssen in \cite{Jan15} generalized
   results of Bocci and Chiantini to configurations of lines in $\P^3$ defined by homogeneous Cohen-Macaulay ideals.
   Symbolic powers of codimension $2$ Cohen-Macaulay ideals have been studied recently in \cite{codim2}.
   Results of these two articles, especially Section 3 in \cite{Jan15}, have motivated our research presented here. Our main result is the following, (see
   Definition \ref{def:pseudo-star}, for the definition of a pseudo-star configuration).
\begin{theorem}[Main result]\label{thm:main}
   Let $\L$ be the union of a finite set of codimension $2$ projective subspaces in $\P^N$ and let $J$ be its vanishing ideal.
   If $J$ is Cohen-Macaulay and
   $$d:=\alpha(J^{(2)})=\alpha(J)+1,$$
   then $\L$ is either contained in a single hyperplane or it is a codimension $2$ pseudo-star configuration determined by $d$ hypersurfaces.
\end{theorem}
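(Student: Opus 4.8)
The plan is to peel off the easy alternative and then reduce the statement to its planar prototype---the theorem of Bocci and Chiantini---by cutting $\L$ with a general plane, and finally to lift the resulting star-configuration structure back to $\P^N$. First, if $\alpha(J)=1$ then $J$ contains a linear form, and since $J=I(\L)$ this forces $\L$ into a hyperplane, which is the first alternative; so assume from now on that $\alpha(J)=d-1\geq 2$, in particular $d\geq 3$, and set out to produce the $d$ hypersurfaces of the pseudo-star.

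Let $\Pi\cong\P^2$ be a general plane. Since $\Pi$ meets each codimension $2$ component of $\L$ transversally in a single point, $\L\cap\Pi$ is a reduced finite set of points, one per component of $\L$. Here the Cohen--Macaulay hypothesis enters: for arithmetically Cohen--Macaulay $\L$ and general $\Pi$ the linear section acquires no equations of lower degree, so $\alpha(I_{\L\cap\Pi})=\alpha(J)=d-1$; combining this with the strict monotonicity of the initial sequence and with the fact that a nonzero form of $J^{(2)}_d$ restricts to a nonzero form of $(I_{\L\cap\Pi}^{(2)})_d$ (a general $\Pi$ is not contained in the zero locus of a given nonzero form, and a form vanishing doubly along the components restricts to one vanishing doubly at the points) yields $\alpha(I_{\L\cap\Pi}^{(2)})=d$. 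Hence $\L\cap\Pi$ falls under Bocci--Chiantini \cite{BocChi11} with the same $d$, so it is either collinear or a star configuration of $d$ lines in general position, the latter consisting of $\binom{d}{2}$ distinct points $l_i\cap l_j$. If the general section is collinear then $\L$ lies in a hyperplane (its linear span drops in dimension under each general hyperplane section) and $\alpha(J)=1$, contradicting $\alpha(J)\geq 2$; therefore $\L\cap\Pi$ is a star configuration of $d$ lines.

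Now I lift this to $\P^N$. For a star configuration of $d\geq 3$ lines in $\P^2$ one checks, by restricting a hypothetical doubly-vanishing curve of degree $d$ to one of the $d$ lines and counting its zeros on $\P^1$, that $(I_{\L\cap\Pi}^{(2)})_d$ is one-dimensional, spanned by the product $l_1\cdots l_d$. Fix a nonzero $F\in J^{(2)}_d$. For general $\Pi$ its restriction is a nonzero element of this one-dimensional space, so $F|_\Pi=c\,l_1\cdots l_d$ is a squarefree product of $d$ distinct lines; since $\deg F=d\geq 3$ and a general plane section of $V(F)$ is reducible, iterated Bertini irreducibility forces $F=\ell_1\cdots\ell_d$ for distinct linear forms $\ell_1,\dots,\ell_d\in\K[\P^N]$ with $\ell_k|_\Pi=l_k$. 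For any component $\Lambda$ of $\L$, the order of vanishing of $F=\ell_1\cdots\ell_d$ along $\Lambda$ equals the number of indices $k$ with $\Lambda\subseteq V(\ell_k)$, which must be at least $2$ because $F\in J^{(2)}$; hence $\Lambda\subseteq V(\ell_i)\cap V(\ell_j)$ for some $i<j$, and equality of dimensions gives $\Lambda=V(\ell_i)\cap V(\ell_j)$. Thus $\L\subseteq\bigcup_{i<j}V(\ell_i)\cap V(\ell_j)$; intersecting with $\Pi$ and comparing with $\L\cap\Pi=\bigcup_{i<j}(l_i\cap l_j)$, a set of $\binom{d}{2}$ distinct points, shows that every $V(\ell_i)\cap V(\ell_j)$ actually occurs, so $\L=\bigcup_{i<j}V(\ell_i)\cap V(\ell_j)$ --- a codimension $2$ pseudo-star configuration determined by the $d$ hypersurfaces $V(\ell_1),\dots,V(\ell_d)$, as required by Definition \ref{def:pseudo-star}.

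The step I expect to be the real obstacle is the passage through a general linear section: showing that the arithmetically Cohen--Macaulay property genuinely forbids the section from acquiring lower-degree equations (so that $\alpha(J)$ and $\alpha(J^{(2)})$ are preserved), and, on the lifting side, making sure that the factorization $F=\ell_1\cdots\ell_d$ read off over one general $\Pi$ is global and that the combinatorics of the generic section---all $\binom{d}{2}$ pairwise intersections present, with no collisions---really does propagate to $\P^N$. This is also the point where the Hilbert--Burch description of codimension $2$ Cohen--Macaulay ideals and the results of \cite{codim2} and \cite{Jan15} are likely to be needed in earnest; the remaining ingredients (Bertini irreducibility, the zero-count on $\P^1$, and the bookkeeping with linear spans) are routine.
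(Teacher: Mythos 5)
Your proof is essentially correct, but it follows a genuinely different route from the paper's. The paper proceeds by induction on $N$, cutting with a single general hyperplane at each step, invoking Proposition \ref{prop:ACM and hyperplane section} to preserve the numerics, Lemma \ref{lem:H section} to preserve non-degeneracy, and resting the whole induction on Janssen's theorem for lines in $\P^3$ as the base case; the lift from $\P^{N-1}$ to $\P^N$ is then a combinatorial count of traces of the $L_i$ inside the hyperplanes $F_1,\ldots,F_d$. You instead cut all the way down to a general plane $\Pi\cong\P^2$ in one stroke, apply Bocci--Chiantini there, and lift globally by showing that a minimal-degree element $F\in J^{(2)}_d$ restricts to the one-dimensional space spanned by $l_1\cdots l_d$ and hence, by Bertini irreducibility applied to each irreducible factor of $F$, splits as a product of $d$ distinct linear forms in $\K[\P^N]$; counting orders of vanishing along the components of $\L$ then identifies each component with a pairwise intersection $V(\ell_i)\cap V(\ell_j)$, and the count $t=\binom{d}{2}$ forces all pairs to occur. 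What your approach buys is considerable: it bypasses both Lemma \ref{lem:H section} (whose failure for two lines in $\P^3$ is exactly why the paper must import \cite{Jan15}) and the $N=3$ base case itself, since your argument works verbatim for $N=3$; the only external input is the planar theorem of \cite{BocChi11}. What the paper's induction buys is that it never has to produce or factor an explicit element of $J^{(2)}$.

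Two small repairs are needed. First, your parenthetical justification for excluding the collinear case --- ``its linear span drops in dimension under each general hyperplane section'' --- is false as stated: two skew $(N-2)$-flats span $\P^N$ yet always meet a general plane in two (automatically collinear) points. But you do not need this claim, since you have already established $\alpha(I_{\L\cap\Pi})=\alpha(J)=d-1\geq 2$ from the ACM property, whereas a collinear section would have $\alpha(I_{\L\cap\Pi})=1$; that is the contradiction. Second, to conclude that $\bigcup_{i<j}V(\ell_i)\cap V(\ell_j)$ is a pseudo-star configuration in the sense of Definition \ref{def:pseudo-star} you must also verify that any three of the hyperplanes $V(\ell_1),\ldots,V(\ell_d)$ meet in codimension $3$; this is immediate because their traces on a general $\Pi$ form a star configuration of $d$ lines, no three of which are concurrent, but it has to be said for the conclusion to match the definition.
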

   Throughout this note we work over a field $\K$ of characteristic zero.
\section{Preliminaries}
   The term ''star configuration'' has been coined by Geramita. It is motivated by the observation that five general lines in $\P^2$
   resemble a pentagram. The objects defined below have appeared in recent years in various guises
   in algebraic geometry, commutative algebra and combinatorics, see
   \cite{GHM13} for a throughout account.
\begin{definition}[Star configuration]\label{def:star}
   Let $\calh=\left\{H_1,\ldots,H_s\right\}$ be a collection of $s\geq 1$ mutually distinct hyperplanes in $\P^N$
   defined by linear forms $\left\{h_1,\ldots,h_s\right\}$. We assume that the hyperplanes meet \emph{properly},
   i.e., the intersection of any $c$ of them is either empty or has codimension $c$, where $c$ is any integer
   in the range $1\leq c\leq\min\left\{s,N\right\}$. The union
   $$S(c,\calh)=\bigcup_{1\leq i_1< \ldots < i_c\leq s} H_{i_1}\cap\ldots\cap H_{i_c}$$
   is the \emph{codimension $c$ star configuration} associated to $\calh$.
   We have
   $$I(c,\calh)=I(S(c,\calh))=\bigcap_{1\leq i_1 < \ldots < i_c\leq s} \left(h_{i_1},\ldots,h_{i_c}\right).$$
   where $h_i$, $i = 1,\ldots, s $ are linear forms in $R$, defining the hyperplanes $H_i$.
\end{definition}
   The condition of meeting properly is satisfied by a collection of general hyperplanes.
   If the collection $\calh$ is clear from the context or irrelevant, we write
   $$S_N(c,s)$$
   to denote a codimension $c$ star configuration determined by $s$ hyperplanes in $\P^N$.

   For the purpose of this note, it is convenient to use the following terminology:
   an $r$--flat in a projective space is a linear subspace of (projective) dimension $r$.
   Thus a codimension $c$ star configuration determined by $s$ hyperplanes
   is the union of $\binom{s}{c}$ distinguished $(N-c)$--flats.

   The following notion is essential for our arguments.
\begin{definition}[Cohen-Macaulay]
   A noetherian local ring $(R,\mathfrak{m})$ is called \emph{Cohen-Macaulay}, if
   $$\depth_{\mathfrak{m}}R=\dim(R).$$
   A noetherian ring $R$ is Cohen-Macaulay (CM) if all of its local rings at prime ideals are Cohen-Macaulay.\\
  A closed
subscheme $Z\subset\P^N$ with defining ideal $I(Z)$, is called
  \emph{arithmetically Cohen-Macaulay (ACM for short)} if its coordinate ring
   $\K[\P^N]/I(Z)$ is CM.
\end{definition}
   By \cite[Proposition 2.9]{GHM13} every star configuration is ACM.

   The following feature of ACM subschemes makes them particularly suited for inductive
   arguments.
\begin{proposition}\label{prop:ACM and hyperplane section}
   Let $Z\subseteq \mathbb{P}^N$ be an ACM subscheme of dimension at least $1$,
   and let $H\subseteq \mathbb{P}^N$ be a general hyperplane. Then the intersection scheme $Z\cap H$ is ACM
   and
   $$\alpha(I(Z))=\alpha(I(Z\cap H)).$$
\end{proposition}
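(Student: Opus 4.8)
The statement has two parts: that $Z\cap H$ is ACM, and that the initial degree is preserved. The plan is to derive both from the standard theory of general hyperplane sections of ACM schemes via the associated short exact sequences of graded modules.

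First I would set up the hyperplane section sequence. Let $I=I(Z)\subset R=\K[x_0,\dots,x_N]$ and let $H=\{\ell=0\}$ be a general hyperplane, so that $\ell$ is a nonzerodivisor on $R/I$ (this uses that $Z$ has dimension at least $1$, hence $R/I$ has depth at least $1$ by the CM hypothesis and $\dim\geq 2$, so a general linear form avoids all associated primes of $R/I$ — and here we invoke that $\K$ has characteristic zero, in fact is infinite). The key algebraic fact is that for a general $\ell$, the ideal of $Z\cap H$ inside the hyperplane ring $R/(\ell)\cong\K[\P^{N-1}]$ is exactly $(I+(\ell))/(\ell)$, i.e. taking the scheme-theoretic intersection commutes with passing to the quotient; this is where generality of $H$ is really used (to ensure $H$ contains no component of $Z$ and meets $Z$ properly, and that no embedded or arithmetic subtleties arise). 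Granting this, the coordinate ring of $Z\cap H$ is $(R/I)/\ell(R/I)$.

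Next, the ACM conclusion: since $\ell$ is a nonzerodivisor on the Cohen–Macaulay ring $R/I$, the quotient $(R/I)/\ell(R/I)$ is again Cohen–Macaulay, of dimension one less. (This is the elementary fact that quotienting a CM local ring by a nonzerodivisor in the maximal ideal preserves the CM property; one applies it at every localization, or simply notes $\depth$ drops by exactly one and so does $\dim$.) Hence $Z\cap H$ is ACM. For the statement about initial degrees, I would use the short exact sequence of graded $R$-modules
$$0\longrightarrow (R/I)(-1)\stackrel{\cdot\ell}{\longrightarrow} R/I\longrightarrow (R/I)/\ell(R/I)\longrightarrow 0,$$
together with the analogous sequence $0\to I(-1)\stackrel{\cdot\ell}{\to} I\to I/\ell I\to 0$ and the inclusion $I/\ell I\hookrightarrow (I+(\ell))/(\ell)$. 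Comparing Hilbert functions: $\alpha(I)$ is the first degree in which $\dim_\K(R/I)_t$ falls below $\binom{N+t}{t}$, and from the sequence the Hilbert function of the hyperplane section is the first difference of that of $R/I$. A short computation with first differences shows the first degree where the hyperplane section's Hilbert function drops below the generic value $\binom{N-1+t}{t}$ is the same as for $R/I$; equivalently, $(I+(\ell))_t/(\ell)_t$ is nonzero for exactly the same range of $t$ as $I_t$. Thus $\alpha(I(Z\cap H))=\alpha(I(Z))$.

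The main obstacle is the bookkeeping in the last step: one must be careful that $\alpha$ is controlled by the Hilbert function only because $R/I$ and the hyperplane ring have no forms of low degree vanishing for trivial reasons, and one must rule out the degenerate possibility that multiplication by $\ell$ could create new low-degree elements in the hyperplane section — precisely here one uses that for general $\ell$ there is no unexpected drop, i.e. $(I:\ell)=I$ in low degrees. I would handle this by noting that $I/\ell I \to (I+(\ell))/(\ell)$ is an isomorphism in all degrees $\leq \alpha(I)$ (both sides vanish below $\alpha(I)$, and in degree $\alpha(I)$ the map is injective since $\ell$ is a nonzerodivisor on $R/I$ and surjective onto the degree-$\alpha(I)$ part by a dimension count), which pins down $\alpha(I(Z\cap H))=\alpha(I)$ cleanly. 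The ACM part, by contrast, is essentially immediate once the nonzerodivisor property of the general linear form is established.
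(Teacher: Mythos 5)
Your argument is correct, but it is worth noting that the paper does not actually prove this proposition: it simply cites \cite[Theorem 1.3.3]{MiglioreBook} for the ACM statement (after disposing of the curve case by observing that zero-dimensional schemes are automatically ACM) and \cite[Corollary 1.3.8]{MiglioreBook} plus ``basic properties of postulation'' for the equality of initial degrees. What you have written is essentially the standard proof that sits behind those citations: a general linear form $\ell$ is a nonzerodivisor on the Cohen--Macaulay ring $R/I$, quotienting by a nonzerodivisor preserves the CM property, and the exact sequence $0\to (R/I)(-1)\to R/I\to (R/I)/\ell(R/I)\to 0$ shows the Hilbert function of the section is the first difference of that of $Z$, whence the first degree where it drops below the generic value $\binom{N-1+t}{t}$ is unchanged. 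So your route is not different in substance, only self-contained where the paper outsources. One point you should sharpen: you attribute the identification of $I(Z\cap H)$ with $(I+(\ell))/(\ell)$ to the ``generality'' of $H$, but generality only buys the nonzerodivisor property; the real reason $I+(\ell)$ is saturated (so that its $\alpha$ is the $\alpha$ of the true ideal of $Z\cap H$, rather than merely an upper bound for it) is that $\depth_{\mathfrak m}(R/I)\geq 2$, which follows from the ACM hypothesis together with $\dim Z\geq 1$ --- this is precisely where the dimension assumption in the statement is used, and for a non-ACM $Z$ the identification can fail even for general $H$. With that clarification your proof is complete and correct.
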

\begin{proof}
   A general hyperplane section of any curve is ACM, because all subschemes of dimension zero are ACM.
   For general hyperplane sections of higher dimensional subschemes see \cite[Theorem 1.3.3]{MiglioreBook}.
   The second claim follows from \cite[Corollary 1.3.8]{MiglioreBook} and some basic properties of postulation.
\end{proof}

   In \cite{Jan15} Janssen introduced the notion of a pseudo-star configuration for lines in $\P^3$.
   In the present note we extend this notion to higher dimensional flats in projective spaces
   of arbitrary dimension. To begin with, note that if $\calh=\left\{H_1,\ldots,H_s\right\}$
   is a collection of $s>N$ hyperplanes in $\P^N$, then the assumption that they intersect
   properly (see Definition \ref{def:star}) is equivalent to assuming that any $(N+1)$ of them
   have an empty intersection.
   For our purposes, we need to weaken this
condition, i.e.
\begin{definition}[Pseudo-star configuration]\label{def:pseudo-star}
   Let $\calh=\left\{H_1,\ldots,H_s\right\}$ be a collection of hyperplanes in $\P^N$ and let $1\leq c\leq N$
   be a fixed integer.
   We assume that the intersection of any $c+1$ of hyperplanes in $\calh$ has codimension $c+1$
   (equivalently: no $c+1$ hyperplanes in $\calh$ have the same intersection as any $c$ of them).
   The union
   $$P(c,\calh)=\bigcup_{1\leq i_1 < \ldots < i_c\leq s} H_{i_1}\cap\ldots\cap H_{i_c}$$
   is called the \emph{codimension $c$ pseudo-star configuration} determined by $\calh$.
\end{definition}
   If $\calh$ is clear from the context or irrelevant, we write $P_N(c,s)$ for a codimension
   $c$ pseudo-star configuration in $\P^N$ determined by $s$ hypersurfaces.
   Of course, any star configuration is a pseudo-star configuration. If $N=c=2$, then also the converse
   holds, i.e., any pseudo-star configuration of points in $\P^2$ is a star configuration. In general
   the two notions go apart, see Section \ref{sec:examples}.
  Moreover, being a pseudo-star configuration is stable under taking cones over the configuration.
\begin{remark}
   Let $I\subset\K[\P^N]$ be an ideal of a codimension $c$ pseudo-star configuration $P_N(c,s)$.
  Then the extension of the ideal $I$ to $\K[\P^{N+1}]$ defines a $P_{N+1}(c,s)$.
\end{remark}
   The construction known in the Liaison Theory as the \textit{Basic Double Linkage}
   (see \cite[chapter 4]{MiglioreBook}) was used in \cite{GHM13} to prove some basic properties of star configuration.
   These properties are also satisfied for pseudo-star configurations as the following proposition shows.
\begin{proposition}\label{prop:pseudo-star basic}
   Let $\calh =\lbrace H_1,\cdots , H_s\rbrace $ be a collection of mutually distinct hyperplanes in $\mathbb{P}^N$
   such that any $c+1$ of them intersect in a subspace of codimension $c+1$.
   Let $P(c,\calh)$ be the associated codimension $c$ pseudo-star configuration and let $I$ be its vanishing ideal.
   Then:
\begin{enumerate}
\item[1)] $\deg P(c,\calh)={s \choose c}$;
\item[2)] $P(c,\calh)$ is $ACM$;
\item[3)] $I^{(m)}$ is CM for all $1\leq m\leq c$;
\item[4)] $\alpha(I)=s-c+1$ and all minimal generators of $I$ occur in this degree.
\end{enumerate}
\end{proposition}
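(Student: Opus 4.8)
The plan is to induct on the number $s$ of hyperplanes, the engine being the \emph{basic double linkage} construction triggered by deleting one of them. Throughout write $h_i$ for a linear form defining $H_i$, $\calh'=\{H_1,\dots,H_{s-1}\}$, and, for $A\subseteq\{1,\dots,s\}$, put $\Lambda_A=\bigcap_{i\in A}H_i$ and $\mathfrak{p}_A=(h_i:i\in A)$; I denote by $I(c,\calh)$ the vanishing ideal of $P(c,\calh)$. A first observation, proved by a short downward induction on $j$, is that the hypothesis forces the intersection of \emph{any} $j$ hyperplanes of $\calh$ with $j\le c+1$ to have codimension exactly $j$: if some $j$-fold intersection with $j\le c$ had codimension $<j$, one of those hyperplanes would be superfluous, and completing to a $(c+1)$-fold intersection would contradict the standing assumption. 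In particular every $\Lambda_A$ with $|A|=c$ is a codimension $c$ linear space, $\mathfrak{p}_A$ is a complete intersection of linear forms, and the $\binom{s}{c}$ flats $\Lambda_A$, $|A|=c$, are pairwise distinct (if $\Lambda_A=\Lambda_B$ with $A\neq B$, then $\bigcap_{i\in A\cup B}H_i=\Lambda_A$ would be a codimension $c$ space contained in a $(c+1)$-fold intersection, impossible). Statement 1) follows: $P(c,\calh)$ is the reduced union of these $\binom{s}{c}$ distinct linear spaces, each of degree one.

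For the inductive step, split the components $\Lambda_A$ of $P(c,\calh)$ according to whether $s\in A$: those with $s\notin A$ sweep out $P(c,\calh')$, while those with $s\in A$ are the flats $\Lambda_{B\cup\{s\}}=H_s\cap\Lambda_B$, $|B|=c-1$. A direct check on generators gives $h_s\,I(c,\calh')+I(c-1,\calh')\subseteq I(c,\calh)$: the forms of $h_s\,I(c,\calh')$ vanish on $P(c,\calh')$ and on every $\Lambda_{B\cup\{s\}}\subseteq H_s$, and the forms of $I(c-1,\calh')$ vanish on each $\Lambda_A$ because $\Lambda_A$ is contained in some codimension $c-1$ flat $\Lambda_{A'}$ with $A'\subseteq\{1,\dots,s-1\}$. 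Both sides have the same radical, since $V(h_s\,I(c,\calh')+I(c-1,\calh'))=(H_s\cap P(c-1,\calh'))\cup P(c,\calh')=P(c,\calh)$ (using $P(c,\calh')\subseteq P(c-1,\calh')$). Moreover the pseudo-star hypothesis says $H_s$ contains no component of $P(c,\calh')$ or of $P(c-1,\calh')$, so $h_s$ is a nonzerodivisor modulo each of the reduced rings $R/I(c,\calh')$ and $R/I(c-1,\calh')$; also $I(c-1,\calh')\subseteq I(c,\calh')$, of codimensions $c-1$ and $c$. Thus $h_s\,I(c,\calh')+I(c-1,\calh')$ is a genuine basic double link in the sense of \cite[Chapter 4]{MiglioreBook}: by the inductive hypothesis both $R/I(c,\calh')$ and $R/I(c-1,\calh')$ are Cohen--Macaulay, so the construction produces a Cohen--Macaulay ideal of codimension $c$; and a local computation at each minimal prime $\mathfrak{p}_A$ (where $h_s$ is a unit when $s\notin A$, and $\mathfrak{p}_A=\mathfrak{p}_{A\setminus\{s\}}+(h_s)$ when $s\in A$) shows the localization is the maximal ideal $\mathfrak{p}_A R_{\mathfrak{p}_A}$, so the ideal is unmixed and generically reduced, hence reduced, and therefore
\[
  I(c,\calh)=h_s\,I(c,\calh')+I(c-1,\calh').
\]
This proves 2). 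The attendant short exact sequence $0\to\big(I(c,\calh')/I(c-1,\calh')\big)(-1)\to R/I(c-1,\calh')\to R/I(c,\calh)\to 0$ yields the Hilbert series recursion, so $\deg P(c,\calh)=\binom{s-1}{c-1}+\binom{s-1}{c}=\binom{s}{c}$, re-proving 1); and since by induction $\alpha(I(c,\calh'))=s-c$ and $\alpha(I(c-1,\calh'))=s-c+1$ with both ideals generated in their initial degrees, the displayed identity shows $I(c,\calh)$ vanishes below degree $s-c+1$ and is generated in that degree, which is 4). The base case $s=c$ (a single linear flat, hence a complete intersection) and the case $c=1$ (a hypersurface, with principal ideal $(h_1\cdots h_s)$) are immediate.

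It remains to treat 3). Since the $\mathfrak{p}_A$, $|A|=c$, are the pairwise incomparable associated primes of the radical ideal $I:=I(c,\calh)$, localizing yields $I^{(m)}=\bigcap_{|A|=c}\mathfrak{p}_A^{(m)}=\bigcap_{|A|=c}\mathfrak{p}_A^{m}$, the last step because each $\mathfrak{p}_A$ is a complete intersection. I would then prove a basic-double-link-type recursion for these intersections paralleling the one above: sorting the generators of $\bigcap_{|A|=c}\mathfrak{p}_A^{m}$ by their order of vanishing along $H_s$ should give an identity of the shape $I(c,\calh)^{(m)}=\sum_{k=0}^{m}h_s^{\,k}\big(I(c,\calh')^{(m)}\cap I(c-1,\calh')^{(m-k)}\big)$, which then feeds into the induction on $s$, the zero-dimensional (and $c=1$) base cases again being trivial since all such schemes are ACM.

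The main obstacle is exactly this last step. Unlike the plain ideal, $R/I^{(m)}$ is \emph{not} Cohen--Macaulay for all $m$: already for honest star configurations it fails once $m$ exceeds the codimension, the fat scheme becoming too singular along the deep loci $\Lambda_A\cap\Lambda_B$, and the hypothesis $m\le c$ is precisely what keeps every ideal occurring in the symbolic-power recursion Cohen--Macaulay and every relevant form a nonzerodivisor on the corresponding quotient, so that the basic double link can be iterated. Carrying out this bookkeeping — verifying the symbolic-power recursion, checking that Cohen--Macaulayness is preserved at each stage when $m\le c$, and confirming that the weakened pseudo-star hypothesis still suffices throughout (for $c=2$ one may alternatively invoke the structure theory of codimension two Cohen--Macaulay ideals, cf.\ \cite{codim2}) — is the technical heart of the argument.
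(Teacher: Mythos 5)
Your parts 1), 2) and 4) are correct and in fact follow the same route the paper relies on: the paper's own proof is essentially a pointer to \cite[Proposition 2.9]{GHM13} together with the remark that the basic double linkage argument there never uses more than the hypothesis that any $c+1$ of the hyperplanes meet in codimension $c+1$. Your explicit reconstruction of that argument --- the identity $I(c,\calh)=h_s\,I(c,\calh')+I(c-1,\calh')$, verified by containment, equality of radicals, Cohen--Macaulayness of the basic double link, and generic reducedness --- is sound and actually more detailed than what the paper records, and the degree and initial-degree computations you extract from the short exact sequence are right. (Minor quibble: in the base case $s=c$ the hypothesis on $(c+1)$-fold intersections is vacuous, so one must separately assume, or read into the definition, that the single $c$-fold intersection has codimension $c$.)

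The genuine gap is part 3). You state the intersection $I^{(m)}=\bigcap_{|A|=c}\mathfrak{p}_A^m$ correctly, but the proposed recursion $I(c,\calh)^{(m)}=\sum_{k=0}^{m}h_s^{\,k}\bigl(I(c,\calh')^{(m)}\cap I(c-1,\calh')^{(m-k)}\bigr)$ is only conjectured, not proved: the inclusion ``$\supseteq$'' is routine, but the inclusion ``$\subseteq$'' --- expressing an arbitrary form vanishing to order $m$ on every component in the displayed shape --- is exactly the hard step, and you then still need to show that each term of the filtration is Cohen--Macaulay and that the iterated basic double links glue to a CM ideal, which is where the hypothesis $m\le c$ must actually enter rather than being invoked heuristically. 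You have correctly located the difficulty and the right tool (this is precisely the content of the first part of the proof of \cite[Theorem 3.2]{GHM13}, which the paper cites for this point, again noting that only the weakened intersection hypothesis is used), but as written your argument for 3) is an announced plan, not a proof.
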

\begin{proof}
   According to the definition of a pseudo-star configuration 1) is obvious.
   Properties 2) and 4) were proved in \cite[Proposition 2.9]{GHM13} (see also \cite[Remark 2.13]{GHM13}).
   Symbolic powers of an ideal defining a pseudo-star configuration are Cohen-Macaulay by the first part of the proof of \cite[Theorem 3.2]{GHM13}.
   Note that Proposition 2.9 and Theorem 3.2 in \cite{GHM13} are stated for star configuration
   but the assumption that the hyperplanes meet properly can be relaxed to the assumption in the mentioned Proposition.
   Hence the proof of that Proposition, works for pseudo-stars too.
\end{proof}
\begin{remark}
   Since the ideal of every linear subspace in a projective space is a complete intersection, 
   by unmixedness theorem, we can describe the $m^{th}$ symbolic power of a pseudo-star configuration in a straightforward manner. In fact, let $\calh=\left\{H_1,\ldots,H_s\right\}$ be a pseudo-star configuration in $\P^N$, defined by the linear forms $h_1,\ldots,h_s \in R$. 
   Let $c \geq 1$ be a fixed integer. Then
   $$I=\bigcap_{1\leq i_1 < \ldots < i_c\leq s}(h_{i_1},\ldots,h_{i_s})$$
   is the defining ideal of codimension $c$ pseudo-star configuration associated to $\calh$. 
   Then by unmixedness theorem, for any positive integer $m$, one has
   $$I^{(m)}=\bigcap_{1\leq i_1 < \ldots < i_c\leq s}(h_{i_1},\ldots,h_{i_s})^m.$$
   \end{remark}
   We use this description, to compute the second symbolic powers of the ideals in the following section.
\section{Examples}\label{sec:examples}
In this section, we assume $R$ is $\K[\P^3]$.
\begin{example}[Star configurations]
   Let $\calh=\left\{H_1, H_2, H_3, H_4\right\}$ be a collection of hyperplanes in $\P^3$
   defined by the following linear forms
   $$h_1=x+2y+3z,\; h_2=x+y+w,\; h_3=x+z+w,\; h_4=y+z+w.$$
   These hyperplanes meet properly. Let $I$ be the ideal of the star configuration
   of lines $S(2,\calh)$ and let $J$ be the ideal of the associated star configuration of points $S(3,\calh)$.
   The minimal free resolution of $I, I^{(2)}, J$ and $J^{(2)}$ respectively are as follows 
\begin{align*}
0\rightarrow R^3(-4)\rightarrow R^4(-3)\rightarrow I \rightarrow 0,
\end{align*}
\begin{align*}
0\rightarrow R^4(-7)\rightarrow R(-4)\oplus R^4(-6)\rightarrow I^{(2)} \rightarrow 0,
\end{align*}
\begin{align*}
0\rightarrow R^3(-4)\rightarrow R^8(-3)\rightarrow R^6(-2)\rightarrow J\rightarrow 0,
\end{align*}
\begin{align*}
0\rightarrow R^6(-6)\rightarrow R^3(-4)\oplus R^{12}(-5) \rightarrow R^4(-3)\oplus R^6(-4)\rightarrow J^{(2)}\rightarrow 0.
\end{align*}
   We see immediately that $\alpha(I)=3$ and $\alpha(I^{(2)})=4$. Similarly for the ideal $J$ we have
   $\alpha(J)=2$ and $\alpha(J^{(2)})=3$.
\end{example}
Our next example, is a pseudo-star configuration in $\P^3$.
\begin{example}[A pseudo-star configuration]
   Let $\calh=\left\{H_1, H_2, H_3, H_4\right\}$ be a collection of hyperplanes in $\P^3$
   defined by the following linear forms
   $$h_1=x+5z,\; h_2=17x+19y,\; h_3=2x+3y+11z,\; h_4=13x+7z.$$
   These hyperplanes do not meet properly but the intersection of any three of them has codimension $3$
   and they all intersect in the point $P=(0:0:0:1)$. Let $I$ be the ideal of the pseudo-star configuration
   of lines $P(2,\calh)$.
   The minimal free resolutions of $I$ and  $I^{(2)}$ are
\begin{align*}
0\rightarrow R^3(-4)\rightarrow R^4(-3)\rightarrow I\rightarrow 0,
\end{align*}
\begin{align*}
0\rightarrow R^4(-7)\rightarrow R(-4)\oplus R^4(-6)\rightarrow I^{(2)}\rightarrow 0.
\end{align*}
   Now we have also $\alpha(I)=3$ and $\alpha(I^{(2)})=4$.

   Note that the codimension $3$ pseudo-star configuration defined by the ideal 
   $$J=(h_1, h_2, h_3)\cap (h_1, h_2, h_4)\cap (h_1, h_3, h_4)\cap (h_2, h_3, h_4)$$
   is now just a single point $\left\{ P\right\}$, i.e. its defining ideal in $\K[x,y,z,w]$ is $J=\langle x,y,z\rangle$.
\end{example}
\section{Proof of the Main Result}
   In the course of proving the main result of this note, the following lemma plays a crucial role. 
   In fact, it is a higher dimensional analogue of \cite[Proposition 2.10]{Jan15}.
\begin{lemma}\label{lem:H section}
   If a collection of $(N-2)$--planes in $\P^N$ with $N\geq 4$ is not contained in a hyperplane in $\P^N$
   (so that there are in particular at least $t\geq 2$ such planes), then its intersection with a \emph{general}
   hyperplane $H\subset \P^N$ is not contained in a hyperplane in $H$.
\end{lemma}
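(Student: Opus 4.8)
The plan is to pass to the dual projective space $(\P^N)^*$ and argue by contraposition. Denote the given $(N-2)$--planes by $\Lambda_1,\dots,\Lambda_t$, their union by $\L$, and for each $i$ let $L_i\subset(\P^N)^*$ be the pencil of hyperplanes of $\P^N$ that contain $\Lambda_i$; since $\Lambda_i$ has codimension $2$, each $L_i$ is a line. Under the usual dictionary between linear subspaces of $\P^N$ and of $(\P^N)^*$, the hypothesis that $\L$ is not contained in a hyperplane of $\P^N$ translates precisely into $\bigcap_{i=1}^{t}L_i=\varnothing$. Now suppose the conclusion fails. Then the set of hyperplanes $H$ for which $\L\cap H$ is contained in a hyperplane of $H$ is dense in $(\P^N)^*$, and intersecting it with the dense open locus on which $\Lambda_i\not\subseteq H$ for every $i$ (the complement of the finitely many lines $L_i$) gives a dense set $T$ of such $H$. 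For $H\in T$ choose a hyperplane $G$ of $H$ with $\L\cap H\subseteq G$; since $G$ is an $(N-2)$--plane of $\P^N$ lying in $H$, we may write $G=H\cap H'$ with $H'\neq H$ a hyperplane of $\P^N$, so that $\Lambda_i\cap H\subseteq H'$ for every $i$.

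The first main step is to show that for each such $H$ the line $\lambda_H:=\overline{HH'}\subset(\P^N)^*$ meets every $L_i$. Fix $i$. If $\Lambda_i\subseteq H'$ then $H'\in L_i$ and we are done, so assume $\Lambda_i\not\subseteq H'$. Since $H\in T$ we also have $\Lambda_i\not\subseteq H$, so $\Lambda_i\cap H$ and $\Lambda_i\cap H'$ are both hyperplanes of $\Lambda_i$, namely $(N-3)$--planes, and the inclusion $\Lambda_i\cap H\subseteq\Lambda_i\cap H'$ forces $\Lambda_i\cap H=\Lambda_i\cap H'=:M_i$. In $(\P^N)^*$ the hyperplanes of $\P^N$ through the $(N-3)$--plane $M_i$ form a plane $\Pi_i$ (a net of hyperplanes), and $\Pi_i$ contains $H$, $H'$ and the entire line $L_i$, the last because $M_i\subseteq\Lambda_i$. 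Two distinct lines in a projective plane always meet (and equal lines a fortiori), so $\lambda_H=\overline{HH'}$ meets $L_i$. Thus every $H\in T$ lies on a line meeting all of $L_1,\dots,L_t$.

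The second main step is a purely projective-geometric statement: if $L_1,\dots,L_t$ are distinct lines with $\bigcap_iL_i=\varnothing$ in a projective space of dimension at least $2$, then there is a linear subspace $W$ with $\dim W\le 3$ that contains every line meeting all of $L_1,\dots,L_t$. If some two of the lines, say $L_i$ and $L_j$, are disjoint, then any line meeting both of them lies in their span $\langle L_i,L_j\rangle$, which is a $\P^3$, and we take $W=\langle L_i,L_j\rangle$. If instead the $L_i$ meet pairwise, then by the classical fact that a family of pairwise-intersecting lines is either concurrent or coplanar --- and concurrency is ruled out by $\bigcap_iL_i=\varnothing$ --- all $L_i$ lie in a common plane $\Pi$; being non-concurrent, three of them form a triangle in $\Pi$, and any line meeting the three sides of a triangle must lie in $\Pi$ (otherwise it would meet $\Pi$ in a single point, which would have to lie on all three sides at once), so $W=\Pi$ works. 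Combining the two steps, every $H\in T$ satisfies $H\in\lambda_H\subseteq W$; since $T$ is dense and $W$ is closed, this forces $W=(\P^N)^*$, contradicting $\dim W\le 3<N$. This contradiction proves the lemma, and it is precisely here that the hypothesis $N\ge 4$ enters.

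I expect the first main step to require the most care. One must be explicit that the only property of $H$ used there is $\Lambda_i\not\subseteq H$ for all $i$, so that a general $H$ suffices; one must verify that the borderline alternative $\Lambda_i\subseteq H'$ is genuinely absorbed; and one must ensure that the passage from the inclusion $\Lambda_i\cap H\subseteq H'$ to the conclusion that $\lambda_H$ meets $L_i$ stays valid in degenerate positions, for instance when $\lambda_H=L_i$. The second step is elementary, but the classical alternative for pairwise-intersecting lines should be stated and used with care.
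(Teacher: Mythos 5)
Your proof is correct, but it takes a genuinely different route from the paper's. The paper argues directly in $\P^N$: it reduces to two $(N-2)$--planes $U,V$ spanning $\P^N$, uses the Grassmann formula to get $\dim(U\cap V)=N-4\ge 0$, and then computes $\dim\left\langle U\cap H,V\cap H\right\rangle=N-1$ for a general hyperplane $H$ --- a three-line dimension count. You instead dualize, encode each $\Lambda_i$ as a pencil $L_i\subset(\P^N)^*$, show that any hyperplane witnessing a degenerate section lies on a line transversal to all the $L_i$, and bound the locus swept by such transversals by a linear space of dimension at most $3<N$. Your argument is longer, but it buys two things. First, it treats all $t$ planes at once, whereas the paper's opening reduction ``it suffices to prove the statement for $t=2$'' is delicate: a collection of $(N-2)$--planes can span $\P^N$ without any two of its members spanning $\P^N$ (e.g.\ the three $2$--planes $\{x_3=x_4=0\}$, $\{x_2=x_4=0\}$, $\{x_2=x_3=0\}$ in $\P^4$, any two of which lie in a common hyperplane), so the paper's proof implicitly needs a further induction on the span; your proof sidesteps this entirely. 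Second, it makes transparent exactly where $N\ge 4$ enters (namely in $3<N$). The points you flag yourself are indeed the ones to nail down in a final write-up: that only $\Lambda_i\not\subseteq H$ is used in Step 1; that $\lambda_H=L_i$ is harmless because Step 2 is a statement about arbitrary lines meeting all the $L_i$; and that in Step 2 the coplanar case forces $t\ge 3$ (two coplanar lines always meet, contradicting $\bigcap_i L_i=\varnothing$), so a triangle genuinely exists.
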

\begin{proof}
   It suffices to prove the statement for $t=2$. Let $U, V$ be $(N-2)$--planes in $\P^N$.
   By the dimension formula
   $$\dim\left\langle U,V\right\rangle=\dim U+\dim V-\dim (U\cap V)$$
   and by the assumption that $U$ and $V$ span $\P^N$, we have
   $$N=N-2+N-2-\dim(U\cap V),$$
   so that $\dim(U\cap V)=N-4$. Since $N\geq 4$ by assumption, the intersection $U\cap V$
   is non-empty. With the usual convention that the dimension of the empty set equals $-1$, we have
   for a general hyperplane $H$
   $$\dim(U\cap H)=\dim U-1=N-3,\;\; \dim(V\cap H)=\dim V-1=N-3,\; $$
   $$\mbox{and}\;\; \dim((U\cap V)\cap H)=\dim(U\cap V)-1=N-5.$$
   Hence
   $$\dim\left\langle U\cap H, V\cap H\right\rangle=N-3+N-3-N+5=N-1.$$
   This means that $(U\cap H)$ and $(V\cap H)$ span $H$ and we are done.
\end{proof}
\begin{remark}
   The above proof fails for two lines in $\P^3$. This is the reason that the argument in \cite{Jan15}
   is somewhat more involved. In fact, the proof of $N=3$ seems the most difficult case, contrary to what one
   might naively expect.
\end{remark}
   We are now in the position to prove our main result.
\proofof{Theorem \ref{thm:main}}
   Let $\L=L_1\cup\ldots\cup L_t$ be the union of $(N-2)$--flats in $\P^N$ such
   that the initial sequence $\alpha_m$ of the vanishing ideal $J$ of $\L$ satisfies
   $$d:=\alpha_2=\alpha_1+1$$
   and $J$ is CM.

   To prove our claim, we proceed by induction on $N$.
   For $N=2$ see \cite[Theorem 1.1]{BocChi11},
   and for $N=3$ see \cite[Theorem 2.13]{Jan15}. Moreover, if $t = 1$, then the claim is clear so we can assume $N\geq 4$ and $t\geq 2$.
   If $\L$ is contained in a hyperplane, then there is nothing to prove.
   So we assume that $\L$ spans the space $\P^N$. Let $H$ be a general hyperplane in $\P^N$.
   Then, the intersection $\L_H=\L\cap H$ can be represented as
   $$\L_H=(L_1\cap H)\cup\ldots\cup(L_t\cap H).$$
   Since $H$ is general, $\dim(L_i\cap H)=N-3$ for all $i=1,\ldots,t$.
   By Proposition \ref{prop:ACM and hyperplane section} the ideal $J_H$ of $\L_H$ is CM
   and its initial sequence $\beta_m=\alpha(J_H^{(m)})$ satisfies
   $$d=\beta_2=\beta_1+1.$$
   By the induction assumption, $\L_H$ is a codimension two pseudo-star configuration
   determined by hypersurfaces $F_1,\ldots,F_d$ in H. Indeed, $\L_H$ cannot be contained
   in a hyperplane since otherwise, by Lemma \ref{lem:H section}, $\L$ would be contained
   in a hyperplane. The hypersurface $F_1$ contains
   its intersections with the remaining $(d-1)$ hypersurfaces $F_2,\ldots,F_d$. These
   intersections are traces of some of the $(N-2)$-flats $L_1,\ldots,L_t$.
   There are exactly $\binom{d}{2}$ intersections among $F_i$'s by 1) in Proposition \ref{prop:pseudo-star basic}.
   There must be exactly as many traces so that $t=\binom{d}{2}$. Since the intersections
   of $F_1$ with $F_2,\ldots,F_d$ are by definition contained in $F_1$, a hyperplane in $H$,
   and they are on the other hand intersections of some $L_i$'s with $H$, the corresponding $L_i$'s
   must be themselves contained in a hyperplane, say $H_1$ in $\P^N$. Permuting the indices
   we obtain hypersurfaces $H_1,\ldots, H_d$ in $\P^N$ such that
   $$F_i=H_i\cap H\;\mbox{ for }\; i=1,\ldots,d.$$ Since every $(N-3)$--flat $(L_i\cap H)$
   is contained in exactly two of $F_i$'s (by the definition of a codimension two pseudo-star
   configuration), every $L_i$ must be contained in \emph{at least} two of $H_i$'s. But there are
   $\binom{d}{2}$ of $L_i$'s and at most that many pair intersections among the $H_i$'s.
   Hence every $L_i$ is contained in \emph{exactly} two of the $H_i$'s. This shows
   that $\L$ is the codimension two pseudo-star configuration determined by $H_1,\ldots,H_d$
   and we are done.
\endproof
   We complete the picture by showing that the converse statement holds for \emph{arbitrary}
   codimension two pseudo-star configurations.
\begin{theorem}\label{thm:complement}
   Let $\L$ be the union of $(N-2)$--flats $L_1,\ldots,L_t$ with the vanishing ideal $J$.
   If $\L$ is
   \begin{itemize}
   \item[a)] contained
   in a hyperplane, then the initial sequence of its vanishing ideal is 
   $$1,2,3,4,\ldots;$$
   \item[b)] a $P_N(2,s)$, then the initial sequence of its vanishing ideal is
   $$s-1,s,2s-1,2s,3s-1,3s,\ldots.$$
   \end{itemize}
\end{theorem}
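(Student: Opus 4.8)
The plan is to treat the two items separately. Case (a) is immediate from the Nagata--Zariski description of symbolic powers: if $\L$ is contained in a hyperplane $H=V(h)$ then $h\in J$, so $\alpha_1=\alpha(J)=1$, while $h^m$ vanishes to order $m$ along $H\supseteq\L$, so $h^m\in J^{(m)}$ and $\alpha_m\le m$; since $J=I(\L)$ is radical, its initial sequence is strictly increasing, whence $\alpha_m\ge\alpha_1+(m-1)=m$ and equality follows. There is no real obstacle here.

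For (b) I use the description $J^{(m)}=\bigcap_{1\le i<j\le s}(h_i,h_j)^{m}$ recorded in the Remark following Proposition~\ref{prop:pseudo-star basic}; here $\calh=\{H_1,\dots,H_s\}$ with $H_i=V(h_i)$ and any three of the $h_i$ cut out a codimension~$3$ flat. I will show $\alpha(J^{(m)})=\lfloor m/2\rfloor+(s-1)\lceil m/2\rceil$, which is $ks-1$ for $m=2k-1$ and $ks$ for $m=2k$, so that reading off $m=1,2,3,\dots$ produces the asserted sequence $s-1,\,s,\,2s-1,\,2s,\dots$. The upper bound comes from explicit elements of $J^{(m)}$: since $(h_i,h_j)$ is a complete intersection, $h_i^{p}h_j^{q}\in(h_i,h_j)^{p+q}$, hence $(h_1\cdots h_s)^{k}\in\bigcap(h_i,h_j)^{2k}$ and $(h_1\cdots h_s)^{k-1}h_2\cdots h_s\in\bigcap(h_i,h_j)^{2k-1}$; alternatively one can use $(h_1\cdots h_s)\,J^{(m-2)}\subseteq J^{(m)}$ and induct down from $\alpha(J)=s-1$ (Proposition~\ref{prop:pseudo-star basic}).

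The matching lower bound $\alpha(J^{(m)})\ge\lfloor m/2\rfloor+(s-1)\lceil m/2\rceil$ is the heart of the matter, and I would prove it by induction on $s$, the base case being $\alpha((h_1,h_2)^m)=m$. Take a nonzero $f\in J^{(m)}$ of minimal degree and write $f=h_1^{a}g$ with $h_1\nmid g$. Two estimates for $\deg g$ drive the argument. First, the pseudo-star hypothesis forces $h_1\notin(h_i,h_j)$ for $i,j\ge 2$, so $h_1$ is a nonzerodivisor on $R/(h_i,h_j)^m$ (whose only associated prime is $(h_i,h_j)$); hence $g\in\bigcap_{2\le i<j\le s}(h_i,h_j)^{m}$ and the inductive hypothesis, applied to the pseudo-star $\{H_2,\dots,H_s\}$, gives $\deg g\ge\lfloor m/2\rfloor+(s-2)\lceil m/2\rceil$. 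Second, the order valuation along the complete intersection $H_1\cap H_j$ yields $g\in(h_1,h_j)^{m-a}$ for each $j\ge 2$, and since $h_2|_{H_1},\dots,h_s|_{H_1}$ are $s-1$ \emph{distinct} linear forms on $H_1\cong\P^{N-1}$ (pseudo-star again), the product $\prod_{j=2}^{s}(h_j|_{H_1})^{\max(0,\,m-a)}$ divides the nonzero form $g|_{H_1}$, so $\deg g\ge(s-1)\max(0,\,m-a)$. A short case analysis --- using the first estimate when $a\ge\lceil m/2\rceil$ and the second when $0\le a<\lceil m/2\rceil$ --- then yields $\deg f=a+\deg g\ge\lfloor m/2\rfloor+(s-1)\lceil m/2\rceil$. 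The main obstacle is precisely the case $h_1\mid f$: restricting $f$ itself to $H_1$ loses the information, and one must combine the inductive bound on $\deg g$ with the divisibility bound coming from the pairs of hyperplanes through $H_1$. Matching the two bounds proves (b).
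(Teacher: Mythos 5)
Your proof is correct, but for part b) it takes a genuinely different route from the paper. The paper's argument is geometric: it cuts $\L$ by $N-2$ general hyperplanes down to a (pseudo-)star configuration of $\binom{s}{2}$ points in $\P^2$, invokes Bocci--Chiantini to get $\alpha(J)\geq s-1$, observes that the product $h_1\cdots h_s$ gives $\alpha(J^{(2)})\leq s$, squeezes using strict monotonicity, and then dismisses the higher symbolic powers with ``the argument is similar.'' You instead work entirely with the primary decomposition $J^{(m)}=\bigcap_{i<j}(h_i,h_j)^m$ and induct on $s$: writing a minimal-degree element as $f=h_1^a g$ with $h_1\nmid g$, you play the inductive bound for the subconfiguration $\{H_2,\dots,H_s\}$ (valid because $h_1$ is a nonzerodivisor modulo each $(h_i,h_j)^m$ with $i,j\geq 2$, by the codimension-$3$ condition) against the divisibility of $g|_{H_1}$ by the $s-1$ pairwise non-proportional forms $h_j|_{H_1}$; the two bounds cover the ranges $a\geq\lceil m/2\rceil$ and $a<\lceil m/2\rceil$ respectively, and the arithmetic checks out in both parities of $m$. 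What your approach buys is significant: it is self-contained (no citation of Bocci--Chiantini), it treats all $m$ uniformly rather than only $m=1,2$, and it sidesteps a real gap in the paper's sketch --- for $m\geq 3$ the reduction to a general hyperplane section is not justified by Proposition \ref{prop:ACM and hyperplane section} alone, since Cohen--Macaulayness of $J^{(m)}$ is only guaranteed for $m\leq c=2$ by Proposition \ref{prop:pseudo-star basic}, so ``similar'' hides a nontrivial issue that your argument simply does not encounter. The paper's route, on the other hand, is shorter for the case $m\leq 2$ actually needed elsewhere in the paper and requires no explicit element-chasing.
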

\proof
   In case a) there is nothing to prove since the initial sequence is strictly increasing.\\
   In case b) we have, to begin with, $t=\binom{s}{2}$ for some $s$. Taking subsequent sections by general hyperplanes $H_1,\ldots,H_{N-2}$
   we arrive to a pseudo-star (hence a star) configuration of $\binom{s}{2}$ points in $\P^2$.
   In this case \cite[Proposition 3.2]{BocChi11} implies that $\alpha(J,H_1,\ldots,H_{N-2})=s-1$.
   This implies
   \begin{equation}\label{eq:1}
      \alpha(J)\geq s-1.
   \end{equation}
   On the other hand the union of $s$ hyperplanes in $\P^N$ vanishes to order two along $\L$, so that
   $\alpha(J^{(2)})\leq s$. Combining this with \eqref{eq:1} we obtain $\alpha(J)=s-1$ and $\alpha(J^{(2)})=s$.
   The argument for higher symbolic powers is similar and we leave the details to the reader.
\endproof
   In the view of our results, it is natural to conclude this note with the following challenge.
\begin{problem}
   Is there any codimension two pseudo-star configuration which is not ACM?
\end{problem}
\paragraph*{Acknowledgement.}
   This research has been carried out while the first author was a visiting fellow
   at the Department of Mathematics of the Pedagogical University of Cracow in the winter term 2017/18 and spring 2018.
   We thank Justyna Szpond for helpful comments.
   The research of the last named author was partially supported by
   National Science Centre, Poland, grant 2014/15/B/ST1/02197.

\bibliographystyle{abbrv}
\bibliography{master}


\bigskip \small

\bigskip

   Mohammad Zaman Fashami,
   Faculty of Mathematics, K. N. Toosi University of Technology, Tehran, Iran.

\nopagebreak
   \textit{E-mail address:} \texttt{zamanfashami65@yahoo.com}

\bigskip

   Hassan Haghighi,
   Faculty of Mathematics, K. N. Toosi University of Technology, Tehran, Iran.

\nopagebreak
   \textit{E-mail address:} \texttt{haghighi@kntu.ac.ir}

\bigskip

   Tomasz Szemberg,
   Department of Mathematics, Pedagogical University of Cracow,
   Podchor\c a\.zych 2,
   PL-30-084 Krak\'ow, Poland.

\nopagebreak
   \textit{E-mail address:} \texttt{tomasz.szemberg@gmail.com}

\bigskip


\end{document}